\definecolor{rred}{rgb}{0.7,0.0,0.2}
\definecolor{bblue}{rgb}{0.2,0.0,0.7}
\newcommand{\secref}[1]{section \ref{sec:#1}}
\newcommand{\seclab}[1]{\label{sec:#1}}
\newcommand{\eqlab}[1]{\label{eq:#1}}
\renewcommand{\eqref}[1]{(\ref{eq:#1})}
\newcommand{\eqsref}[2]{(\ref{eq:#1}) and~(\ref{eq:#2})}
\newcommand{\figref}[1]{Fig.~\ref{fig:#1}}
\newcommand{\figlab}[1]{\label{fig:#1}}
\newcommand{\lemmaref}[1]{Lemma~\ref{lemma:#1}}
\newcommand{\lemmalab}[1]{\label{lemma:#1}}
\newcommand{\remref}[1]{Remark~\ref{remark:#1}}
\newcommand{\remlab}[1]{\label{remark:#1}}
\newcommand{\thmref}[1]{Theorem~\ref{theorem:#1}}
\newcommand{\thmlab}[1]{\label{theorem:#1}}
\newcommand{\algoref}[1]{Algorithm~\ref{algorithm:#1}}
\newcommand{\algolab}[1]{\label{algorithm:#1}}
\newtheorem{algo}{Algorithm}
\newcommand{\R}{\mathbb R}
\title {On the approximation of the canard explosion point\\ in epsilon-free systems}
\author{M. Br{\o}ns and K. Uldall Kristiansen\\
  Department of Applied Mathematics and Computer Science \\
  Technical University of Denmark \\
  2800 Kongens Lyngby \\
  Denmark } 
\begin{document}
\maketitle 

\begin{abstract}
  A canard explosion is the dramatic change of period and amplitude of
  a limit cycle of a system of non-linear ODEs in a very narrow
  interval of the bifurcation parameter. It occurs in slow-fast
  systems and is well understood in singular perturbation problems
  where a small parameter epsilon defines the time scale
  separation. We present an iterative algorithm for the determination
  of the canard explosion point which can be applied for a general
  slow-fast system without an explicit small parameter. We also present assumptions under which the algorithm gives accurate estimates of the canard explosion point.
  Finally, we apply the algorithm to the van der Pol equations
  and a Templator model for a self-replicating system with no explicit
  small parameter and obtain very good agreement with results from
  numerical simulations.
\end{abstract}


\pagestyle{myheadings}
\thispagestyle{plain}

\section{Introduction}
\label{sec:introduction}
We consider singular perturbation problems of the form
\begin{align}
  \dot x &=F(x,y,z,\epsilon),\eqlab{xydotGeneral}\\
  \dot y &=G(x,y,z,\epsilon),\nonumber
\end{align}
where $\epsilon$ is small parameter and $z$ is a bifurcation
parameter. All vector fields are assumed to be analytic in their
arguments. In the singular limit $\epsilon=0$ the system is assumed to
have a critical manifold, that is, a manifold of critical points. We
restrict our attention to planar systems with $x\in \R$ and $y\in \R$
such that the critical manifold is a curve. The parameter $\epsilon$
defines a time-scale separation, and for singular perturbation problems
in the special form 
\begin{align}
  \dot x &=F(x,y,z,\epsilon),\eqlab{xydot}\\
  \dot y &=\epsilon H(x,y,z,\epsilon),\nonumber
\end{align}
the state variables $x$ and $y$ can be identified as the fast and the
slow variable, respectively. For the system \eqref{xydot} the critical
manifold is defined by $F(x,y,z,0)=0$, and it follows from Fenichel
theory 
\cite{Fenichel1979:geometric-singular-perturbation,Jones1995:geometric-singular-perturbation}
that, under certain regularity conditions, there exists a slow
invariant manifold $\epsilon$-close to the critical manifold. For
simplicity, we will suppress the explicit dependence of $F$ and $H$ on
$\epsilon$ in the system \eqref{xydot} from now on.

\textbf{Canard explosion}. The canard explosion is the dramatic change
of amplitude and period of a limit cycle born in a Hopf bifurcation in
a very narrow parameter interval. The phenomenon is well understood in
singular perturbation problems of the form \eqref{xydot}
\cite{ben1,zvonkin-1984:non-standard,krupa-2001:relaxation,brons-2005:relaxation}. Canards
are solutions which are $\epsilon$-close to a critical manifold on the
form $\eta_0=\eta_0(y,z)$ defined from $F(\eta_0(y,z),y,z)=0$ both on
an attracting part where $\partial_xF < 0$ and a repelling part where
$\partial_xF > 0$. Thus, canards are slow manifolds with both an
attracting and a repelling part. They only exist for a narrow range
(of order $e^{-c/\epsilon}$) of the parameter $z$, and the canard
explosion occurs when a segment of a limit cycle is a canard. A unique
asymptotic expansion of a parameter value $z$ where the canard occurs,
the canard point, and an expansion of the corresponding (maximal)
canard can readily be obtained.

However, canard explosions are also observed in slow-fast systems
where there is no explicit small parameter that separates the
timescales,
\begin{align}
  \dot x &=F(x,y,z),\eqlab{epsfree}\\
  \dot y &=H(x,y,z),\nonumber
\end{align}
and where existing singular perturbation theory cannot be applied
directly. An example is the Templator model which we consider in
Section \ref{sec:applications}.  In
\cite{brons-2013:iterative} a modification of the iterative method of
Fraser and Roussel \cite{fra1,fra2}, devised to construct regular slow
manifolds, was proposed to determine a canard point for a general
system of the form \eqref{epsfree}. The method was successfully
applied to the van der Pol equations where it was shown that the first
terms in the asymptotic expansion of the canard point are determined
correctly, and on a Templator model for a self-replicating system with
no explicit $\epsilon$ where canard explosions have been found
numerically.

In this paper, we will consider a modified version of this
method. Being based on a linearization, the method is explicit, in
contrast to the original one. Furthermore, we present explicit non-degeneracy
conditions as well as ``smallness conditions'' and show how they
guarantee accurate approximations for the canard solution and the
canard explosion point for the system \eqref{epsfree}.  We illustrate
the method with examples.

\textbf{Notation}. All norms will be denoted by $\vert \cdot \vert$. Superscripts with $n\in \mathbb N_0$ will be used to denote partial sums such as:
\begin{eqnarray}
 \eta^n = \sum_{i=0}^n \eta_i,\quad n\ge 0,\eqlab{etaNN}
\end{eqnarray}
with each of the terms in the sum being enumerated through subscripts. Following this convention means that $\eta^0=\eta_0$.
We will in this paper suppose that the vector-field is analytic. In particular we will consider complex sets of the form $x\in (a,b)+i\nu$ and $y\in (c,d)+i\sigma$. Here we by $(a,b)+i\nu\subset \mathbb C$ and $(c,d)+i\sigma\subset \mathbb C$ denote the complex $\nu$ and $\sigma$-neighborhoods of the real intervals $(a,b)$ resp. $(c,d)$. We will then use $\vert h \vert_{\nu}$ to denote the sup-norm of an analytic function $h$ over the domain $(a,b)+i\nu$. This representation gives the following compact form of Cauchy's estimate:
 \begin{lemma}
  Let $h=h(x)$ be analytic in $x\in (a,b)+i\nu$. Then 
  \begin{align*}
   \vert h'\vert_{\nu-\xi}\le \frac{\vert h\vert_{\nu}}{\xi},\quad 0<\xi\le \nu.
  \end{align*}
 \end{lemma}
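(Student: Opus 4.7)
The plan is to apply Cauchy's integral formula for the derivative on an appropriately chosen circle, which is the classical route to this kind of estimate.

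First I would fix an arbitrary point $x_0 \in (a,b)+i(\nu-\xi)$. By the paper's definition, this means $x_0$ lies within distance $\nu-\xi$ of the real interval $(a,b)$. Therefore the closed disk $\overline{D}(x_0,r)$ of radius $r<\xi$ around $x_0$ is contained in the strictly larger $\nu$-neighborhood $(a,b)+i\nu$, by the triangle inequality: any point within distance $r$ of $x_0$ lies within distance $(\nu-\xi)+r < \nu$ of $(a,b)$. This is the one geometric fact that the whole estimate relies on, and it depends on reading $(a,b)+i\nu$ as a Minkowski tube rather than a rectangular strip.

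Next, since $h$ is analytic on the larger neighborhood, Cauchy's integral formula gives
\begin{equation*}
  h'(x_0) = \frac{1}{2\pi i}\oint_{|z-x_0|=r} \frac{h(z)}{(z-x_0)^2}\,dz.
\end{equation*}
Applying the standard ML-bound to this integral, and using that $|h(z)|\le |h|_{\nu}$ on the contour, yields
\begin{equation*}
  |h'(x_0)| \le \frac{1}{2\pi}\cdot 2\pi r \cdot \frac{|h|_{\nu}}{r^2} = \frac{|h|_{\nu}}{r}.
\end{equation*}

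Finally I would let $r\to\xi^-$ to obtain the pointwise bound $|h'(x_0)|\le |h|_{\nu}/\xi$, and then take the supremum over $x_0 \in (a,b)+i(\nu-\xi)$ to conclude $|h'|_{\nu-\xi}\le |h|_{\nu}/\xi$. There is no real obstacle here; the one step to check carefully is the inclusion of the closed disk in the analyticity domain, after which the estimate is a direct application of a standard complex-analytic argument.
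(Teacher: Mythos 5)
Your argument is correct: the closed disk of radius $r<\xi$ about any point of the $(\nu-\xi)$-neighborhood lies in the $\nu$-neighborhood, and Cauchy's integral formula with the ML-bound then gives the estimate after letting $r\to\xi^-$ and taking suprema. The paper itself offers no argument beyond citing the standard Cauchy estimate (Rudin, Theorem 10.26), and your proof is exactly that classical argument written out, so there is nothing to add.
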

 \begin{proof}
  See \cite[Theorem 10.26]{Rudin:1987:RCA:26851}.
 \end{proof}

\textbf{The iterative method of Fraser and Roussel}. A slow
manifold for Eqns.~(\ref{eq:xydot}) of the form $x=\eta(y,z)$ fulfills
the invariance equation 
\begin{align}
 -\epsilon \partial_y \eta H(\eta,y,z) + F(\eta,y,z) = 0,\eqlab{inveqn}
\end{align}
obtained by eliminating time. Fraser and Roussel \cite{fra1,fra2} developed an iterative method for
the approximation of slow manifolds from the scheme
\begin{align}
 -\epsilon \partial_y \eta^{n-1} H(\eta^{n},y,z) + F(\eta^{n},y,z) = 0,\eqlab{standard_fr}
\end{align}
starting from the critical manifold $\eta^0=\eta_0(y,z)$. The function $\eta=\eta^{n}$ satisfies \eqref{inveqn} up to the error
\begin{align}
 -\epsilon \partial_y \eta_n H(\eta^{n},y,z),\eqlab{err_fr}
\end{align}
using the notation \eqref{etaNN}. 
In \cite{kri3} it was shown using Cauchy estimates to control
\eqref{err_fr} that this procedure, even for an arbitrary number of
slow and fast variables, leads to slow manifolds exponentially close
($\mathcal O(e^{-c/\epsilon})$) to invariance. The equation
\eqref{standard_fr} is non-linear in $\eta^{n}$, but Neishtadt
\cite{nei87} showed that the same convergence is obtained if $F$ and
$H$ in \eqref{standard_fr} are linearized with respect to $x$ at $\eta^{n-1}$ such that
the equation becomes linear in $\eta_n = \eta^n-\eta^{n-1}$.

A crucial assumption for the success of these methods is that
$\vert \partial_x F\vert \gg \epsilon$. This is violated
near fold points of the critical manifold where
$\partial_x F=0$, and this is where canards may
occur. To study canards in systems with fold points one
instead makes use of the fact that  $\partial_y F\ne 0$ and
solve $F(x,y,z)= 0$ for $y=\zeta_0(x,z)$. A fold point $(x_0,\zeta_0(x_0,\mu_0))$ is
then characterized by $\partial_x \zeta_0(x_0,\mu_0)=0$ for some
parameter value $z=\mu_0$. For a slow manifold $y=\zeta(x,z)$ the
invariance equation becomes
\begin{equation}
  \label{eq:1}
  -\partial_x\zeta F(x,\zeta,z) + \epsilon H(x,\zeta,z) = 0,
\end{equation}
and following Fraser and Roussel it is proposed in
\cite{brons-2013:iterative} to solve this iteratively from
\begin{equation}
  \label{eq:2}
-\partial_x\zeta^{n-1}F(x,\zeta^n,z) + \epsilon H(x,\zeta^n,z) = 0
\end{equation}
starting with the critical manifold $\zeta^0=\zeta_0$. A solution
$\zeta^n(x,z)$ of (\ref{eq:2}) will generally have a singularity close
to the fold, but this may be canceled by an appropriate choice of
$z=\mu^n$. Then $\zeta^n$ is well-defined at the fold, and hence
represents a canard. Thus, this procedure yields a sequence $\mu^n$ of
approximations to the canard point as well as approximations
$y=\zeta^n(x,\mu^n)$ of the corresponding canard.  In the present
paper we follow Neishtadt again, and linearize Eqn.~(\ref{eq:2}) at
$\zeta^{n-1}$ such that it can be solved explicitly for $\zeta^n$.  A
precise formulation of this procedure requires some notation which we
turn to now; the complete algorithm is described as Algorithm 1 below.


\section{The modified Fraser-Roussel algorithm for canards}
\label{sec:modif-fras-rouss}
Consider the system \eqref{xydotGeneral} and let
\begin{align*}
 V(x,y,z,\epsilon) = \begin{pmatrix}
                      F(x,y,z,\epsilon)\\
                      G(x,y,z,\epsilon)
                     \end{pmatrix},
\end{align*}
denote the vector-field.  A critical manifold for \eqref{xydotGeneral}
is a smooth curve of fixed points $V(x,y,z,0)=0$ within the
$(x,y)$-plane for \eqref{xydotGeneral}$_{\epsilon=0}$.  We shall
assume that this manifold can be parametrized by $x$ so that
$y=\zeta_0(x,z)$. We then have
\begin{lemma}\lemmalab{foldPoint}
 The critical manifold is normally hyperbolic at $(x_0,\zeta_0(x_0,z))$ if and only if
 \begin{align*}
  \partial_y V \cdot (-\partial_x \zeta_0,1)\ne 0.
 \end{align*}
\end{lemma}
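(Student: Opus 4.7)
The plan is to exploit the fact that, since the critical manifold is a curve of equilibria of the planar vector field $V(\cdot,\cdot,z,0)$, one eigenvalue of the Jacobian $DV$ at a point of the manifold is automatically zero (with the tangent vector as eigenvector). For a $2\times 2$ matrix this forces the remaining eigenvalue to equal the trace, and normal hyperbolicity is precisely the statement that this second eigenvalue is nonzero. I will then rewrite the trace in the form stated in the lemma using implicit differentiation of $V(x,\zeta_0(x,z),z,0)=0$.

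More concretely, I would first differentiate the defining identity $V(x,\zeta_0(x,z),z,0)=0$ with respect to $x$ to obtain
\begin{align*}
 \partial_x V(x,\zeta_0(x,z),z,0) + \partial_y V(x,\zeta_0(x,z),z,0)\,\partial_x\zeta_0(x,z)=0.
\end{align*}
Interpreted as a matrix-vector equation, this says that the tangent vector $(1,\partial_x\zeta_0)^T$ lies in the kernel of the Jacobian $DV=(\partial_x V,\partial_y V)$ evaluated on the manifold; hence $0$ is an eigenvalue.

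Second, since $DV$ is a $2\times 2$ matrix whose spectrum contains $0$, the other eigenvalue is $\lambda=\operatorname{tr}(DV)=\partial_x F+\partial_y G$. By Fenichel's definition the critical manifold is normally hyperbolic at $(x_0,\zeta_0(x_0,z))$ precisely when the linearization restricted to directions transverse to the tangent space is hyperbolic, which here amounts to $\lambda\neq 0$.

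Third, I would substitute the componentwise consequence of the identity above, namely $\partial_x F=-\partial_y F\,\partial_x\zeta_0$, into the trace to get
\begin{align*}
 \operatorname{tr}(DV) = -\partial_y F\,\partial_x\zeta_0+\partial_y G = \partial_y V\cdot (-\partial_x\zeta_0,1).
\end{align*}
Combining this with the equivalence $\lambda\ne 0 \iff$ normal hyperbolicity yields the claim. The proof is essentially bookkeeping; the only conceptual point to get right is the characterisation of normal hyperbolicity in terms of the single nonzero eigenvalue of $DV$, which I would state explicitly before the computation.
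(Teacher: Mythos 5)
Your proof is correct and follows essentially the same route as the paper: implicit differentiation of $V(x,\zeta_0(x,z),z,0)=0$ to exhibit the zero eigenvalue, then identification of the remaining eigenvalue with $\partial_y V\cdot(-\partial_x\zeta_0,1)$. The only cosmetic difference is that you obtain the second eigenvalue via the trace, whereas the paper reads both eigenvalues directly off the rank-one form $DV=\begin{pmatrix}-\partial_y V\,\partial_x\zeta_0 & \partial_y V\end{pmatrix}$; the two computations are equivalent.
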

\begin{proof}
 The layer equations of \eqref{xydotGeneral} are
 \begin{align*}
 \begin{pmatrix}
  \dot x\\
  \dot y
 \end{pmatrix} &=V(x,y,z,0),
 \end{align*}
 where $\dot x$ and $\dot y$ in general are both non-zero. 
 An equilibrium $(x_0,\zeta_0(x_0,z))$ is then normally hyperbolic if the linearization, described by the Jacobian:
 \begin{align*}
  DV = \begin{pmatrix}
\partial_x V & \partial_y V   
  \end{pmatrix},
 \end{align*}
 only has one zero eigenvalue. Since $V(x,\zeta_0(x,z),z,0)=0$ we have by implicit differentiation that 
 \begin{align*}
  \partial_x V = -\partial_y V\partial_x \zeta_0.
 \end{align*}
Therefore we can write $DV$ as
\begin{align*}
 DV = \begin{pmatrix}
       -\partial_y V\partial_x \zeta_0 & \partial_y V
      \end{pmatrix}.
\end{align*}
The eigenvalues of $DV$ are then directly obtained:
\begin{align*}
 0 \quad \text{and}\quad \partial_y V \cdot (-\partial_x \zeta_0,1),
\end{align*}
and the result therefore follows.
\end{proof}

Following \lemmaref{foldPoint}, the critical manifold $y=\zeta_0(x,z)$ loses normal hyperbolicity at point $(x_0,\zeta_0(x_0,z))$ where
\begin{align}
  \partial_y V \cdot (-\partial_x \zeta_0,1)= 0. \eqlab{foldPoint}
\end{align}
Generically (see (A1) below) such a point is a fold point. 
\begin{remark}
 Geometrically, \eqref{foldPoint} means that the critical fiber of $(x_0,\zeta_0(x_0,z))$ is tangent to the graph $y=\zeta_0(x,z)$ at this point. 
\end{remark}

One of the main aims of this paper is to present a method that applies
to systems of the form \eqref{epsfree} without an explicit
$\epsilon$. To this end, we define a manifold $y=\zeta_0(x,z)$ from
\begin{align*}
 F(x,\zeta_0(x,z),z)=0.
\end{align*}
Were there an $\epsilon$ multiplying $H$, as in \eqref{xydot}, this
would be the critical manifold for $\epsilon=0$ which can be used be
used as a starting point for a Fraser-Roussel iteration as described
in \S~\ref{sec:introduction}.  The precise assumptions needed for this
to work in the $\epsilon$-free setting are given in (A1) and (A2)
below.


To continue, let $(x_0,\mu_0)$, be fixed, their values to be
determined later in (A1), and introduce $y_0$ and $z_0$ by
\begin{align*}
y&=\zeta_0(x,z) + y_0,\\
z&=\mu_0+z_0.
\end{align*}
This leads to the following extended system obtained from \eqref{xydot}:
\begin{align}
 \dot x & =F_0(x,y_0,z_0)=f_0(x)+F_{0y}(x,y_0,z_0) y_0+F_{0z}(x,y_0,z_0)z_0,\eqlab{xy0z0eqn}\\
 \dot y_0&= e_0(x)+(\Phi(x)+b_0(x))z_0+T_0(x,z_0)+(\Lambda(x)+a_0(x,z_0))y_0+R_0(x,y_0,z_0),\nonumber\\
 \dot z_0&=0,\nonumber
\end{align}
where
\begin{align}
F_{0y}(x,y_0,z_0) &= (\text{using that $F(x,\zeta_0(x,\mu_0),\mu_0)= 0$})\nonumber\\
&=\int_0^1 \partial_y F(x,\zeta_0(x,\mu_0)+sy_0,\mu_0+s z_0)ds,\nonumber\\
F_{0z}(x,y_0,z_0)&=\int_0^1 \partial_z F(x,\zeta_0(x,\mu_0)+s y_0,\mu_0+s z_0)ds,\nonumber\\
e_{0}(x) &=G(x,\zeta_0(x,\mu_0),\mu_0),\eqlab{e0}\\
\Phi(x)&= -\partial_x \zeta_{0} (x,\mu_0) \partial_z F(x,\zeta_0(x,\mu_0),\mu_0) +\partial_z G(x,\zeta_{0}(x,\mu_0),\mu_0),\eqlab{B0}\\
\Lambda (x)&=-\partial_x \zeta_0 \partial_y F(x,\zeta_0(x,\mu_0),\mu_0)+ \partial_y G(x,\zeta_0(x,\mu_0),\mu_0),\eqlab{A0eqn}
\end{align}
and $T_0=\mathcal O(z_0^2)$, $a_0=\mathcal O(z_0)$, and $R_0=\mathcal O(y_0^2)$. Here we have just Taylor-expanded the right hands sides about $(y_0,z_0)=(0,0)$. For later convenience we have also introduced $f_0\equiv 0$ and $b_0\equiv 0$. The subscripts on $y_0$, $z_0$ and the functions $f_0,\ldots, b_0,\ldots,R_0$ are used to indicate that they later will be part of an iteration. The functions $\Lambda$ and $\Phi$ will not be updated.

The function $\Lambda=\Lambda(x)$ in \eqref{A0eqn} is precisely
$\partial_y V \cdot (-\partial_x \zeta_0,1)$ which according to \eqref{foldPoint} vanishes at a fold point $(x_0,\zeta_0(x_0,\mu_0))$ at a given parameter value $z=\mu_0$. Therefore we will assume that there is such a point where $\Lambda$ vanishes. This only gives one condition on the pair $(x_0,\mu_0)$. For the possibility of having a canard solution we need further conditions. These are contained in the following assumptions:
\begin{itemize}
 \item[(A1)] \textit{The pair $(x_0,\mu_0)$ is so that $\Lambda$ \eqref{A0eqn} and $e_0$ \eqref{e0} vanish:
\begin{align*} 
 -\partial_x \zeta_0(x_0,\mu_0) \partial_y F(x,\zeta_0(x_0, \mu_0),\mu_0)+&\partial_y G(x_0,\zeta_0(x,\mu_0),\mu_0)= 0,\\ 
 G(x_0,\zeta_0(x_0,\mu_0),\mu_0)&=0.
  \end{align*}}
  \end{itemize}
  Given that $\Lambda(x_0)=0$ and $e_0(x_0)=0$ we can define $\tilde \Lambda$ and $\tilde e_0$ by 
  \begin{align}
  \tilde \Lambda(x) &= \int_0^1 \partial_x \Lambda(x_0+s(x-x_0))ds,\nonumber\\
  &\text{respectively}\nonumber\\
  \tilde e_0(x) &= \int_0^1 \partial_x e_0(x_0+s(x-x_0))ds, \label{eq:tildee0}
  \end{align}
  so that $\Lambda(x) = (x-x_0)\tilde \Lambda(x)$ and $e_0(x) = (x-x_0)\tilde e_0(x)$.  We further assume: 
  \begin{itemize}
    \item[(A2)] \textit{The following non-degeneracy and ``smallness'' conditions hold true: Let 
    \begin{align}
     \tilde \delta_0 &\equiv \vert \tilde e_0\vert_{\nu_0},\eqlab{tdelta0}\\
     K &\equiv \vert \tilde \Lambda^{-1}\vert_{\nu_0}.\nonumber
    \end{align}
    Then there exist $\epsilon\ll 1$  so that 
    \begin{align}
    \tilde \delta_0 &\le K^{-1}\epsilon,\eqlab{K0}
    \end{align}
    and 
    \begin{align*}
     \vert F_{0y}\vert_{\nu_0},\,\vert F_{0z}\vert_{\nu_0},\, \vert T_0\vert_{\nu_0},\,\vert R_0\vert_{\nu_0}\ll K^{-1}\epsilon^{-1}.
    \end{align*}
    Furthermore, either of the following conditions hold true:
       \begin{itemize}
       \item[(a)]
    \begin{align}
    \vert \Phi(x_0) \vert^{-1} \ll K^{-1}\epsilon^{-1},\eqlab{B0est1a}
    \end{align}
    and 
    \begin{align}
     \vert \Phi\vert_{\nu_0} \ll K^{-1} \epsilon^{-1}. \eqlab{B0est2a}
    \end{align}
    \item[(b)] 
    \begin{align}
    \vert \Phi(x_0) \vert^{-1} &\le K^{-1}\epsilon^{-1},\eqlab{B0est1b}
    \end{align}
    and 
    \begin{align}
     \vert \Phi\vert_{\nu_0} \le K^{-1} \epsilon. \eqlab{B0est2b}
    \end{align}
     \end{itemize}
}

\end{itemize}
 \begin{remark}
 We include case (b) in (A2) to cover the case where $G=\epsilon H$ is small. 
 \end{remark}


The reason for supposing analyticity is that we in this case can present what we believe are optimal \textit{exponential estimates}. However, we will not need any smoothness assumptions on how $\epsilon$ enters beyond condition (A1) and (A2). In particular, the analysis of our method is not based on asymptotic expansions in $\epsilon$. 


The function $\tilde e_0$ in \eqref{tildee0} is the \textit{obstacle to invariance} of $y_0=0,\,z_0=0$: If $\tilde e_0\equiv 0$ then $y_0=0,z_0=0$ corresponds to a canard solution. We have therefore introduced $\tilde \delta_0$ in \eqref{tdelta0} as the error $\tilde \delta_0=\vert \tilde e_0\vert_{\nu_0}=\mathcal O(\epsilon)$. The system \eqref{xy0z0eqn} is our normal form. Our algorithm will be based upon applying transformations, affine in $y_0$ and $z_0$, to \eqref{xy0z0eqn} that seek to diminish the \textit{error} $\tilde e_0$. These transformations directly lead to a simple algorithm similar to that presented in  \cite{brons-2013:iterative} that we present in \algoref{algo}. 



%
\begin{remark}
Suppose that $G=\epsilon H$ is truly small and the slow and fast variables have been properly identified. 
Then there are known sufficient conditions for a canard explosion \cite{brons-2005:relaxation,kru2}:
\begin{itemize}
 \item[(B1)] There exists a pair $(\tilde x_0,\tilde \mu_0)$ so that 
 \begin{align*}
  \partial_x \zeta_0(\tilde x_0,\tilde \mu_0) = 0,\quad H(\tilde x_0,\zeta_0(\tilde x_0,\tilde \mu_0),\tilde \mu_0) = 0.
 \end{align*}
\item[(B2)] The following non-degeneracy conditions hold true:
\begin{align*}
 \partial_y F(\tilde x_0,\zeta_0(\tilde x_0,\tilde \mu_0),\tilde \mu_0) \ne 0,\,&\partial_x^2 F(\tilde x_0,\zeta_0(\tilde x_0,\tilde \mu_0),\tilde \mu_0)\ne 0,\\
 \partial_z H(\tilde x_0,\zeta(\tilde x_0,\tilde \mu_0),\tilde \mu_0) \ne 0,\,&\partial_x H(\tilde x_0,\zeta_0(\tilde x_0,\tilde \mu_0),\tilde \mu_0) \ne 0,
\end{align*}
\end{itemize}
Our condition (A1) replaces (B1). They are equivalent when
$\partial_x^2 F(x_0,\zeta_0(x_0,\mu_0),\mu_0)\ne 0$ by the implicit
function theorem. Similarly, using (A1), it follows that the first
three inequalities (B2) are equivalent to those in (A2), case (b), for
$\epsilon$ sufficiently small.  In condition (A2), however, we do not
require
$\partial_x H(\tilde x_0,\zeta_0(\tilde x_0,\tilde \mu_0),\tilde
\mu_0) \ne 0$.
This condition is included in (B2) because it guarantees that the
nullclines of $x$ and $y$ are transverse at the fold point and that
the equilibrium undergoes a Hopf bifurcation. It is the associated
limit cycles that undergo rapid amplitude growth in the canard
explosion. In agreement with \cite{brons-2005:relaxation}, our
algorithm and main result (\thmref{expest} below) still apply without
the need of this assumption but the results may have little dynamical
significance. There is a well-known connection between the first Liapounov
coefficient, the Hopf point and the canard point to lowest order in
$\epsilon$ \cite{krupa-2001:relaxation,brons-2005:relaxation}. This was exploited in
\cite{Kuehn2010:first-lyapunov-coefficients} as a numerical tool to estimate
canard explosion points.
\end{remark}
\begin{remark}\remlab{rema0}
 Suppose again that the slow and fast variables have been properly identified. Then one can actually replace $\Lambda$ in \eqref{A0eqn} by $-\partial_x \zeta_0 \partial_y F$ and ignore the small term $\partial_y (\epsilon H)$. Our main result still applies as the term ignored can be collected into $a_0$. Indeed, the iterative lemma, \lemmaref{itlam}, that is the basis of our proof of the main theorem, just assumes that $a_0(x,0)$ is bounded from above by $c\epsilon$, see \eqref{best} below, for some $c$ sufficiently large.  However, in the general case, this term cannot be ignored. See also \secref{templator2} where we apply our algorithm to the Templator model. 
\end{remark}
\begin{remark}
 As described in \remref{rema0}, the result of the paper still applies if $\vert a_0(x,0)\vert_{\nu_0} \ll \vert \tilde \Lambda\vert_{\nu_0}$. Similarly, we can also allow for $\vert f_0\vert_{\nu_0}\ll\vert \tilde \Lambda\vert_{\nu_0}$ and $\vert b_0\vert_{\nu_0}\ll\vert \Phi\vert_{\nu_0}$. The iterative lemma, \lemmaref{itlam}, still applies (see also \eqref{best} below).
\end{remark}

\textbf{The modified iterative method for the computation of canard explosion}. We are looking for a canard solution through a graph $y=\zeta(x)$ for a value $\mu$ of the parameter $z$. The invariance of the graph gives the equation (\ref{eq:1})
for $\zeta=\zeta(x)$, which following Neishtadt \cite{nei87} we wish to approach iteratively starting from $y=\zeta_0(x,\mu_0)$, $z=\mu_0$ and continue with solving the linear equations:
\begin{align}
\rho_{n-1}(x,z)&+\Lambda(x) \zeta_n(x,z)=0 ,\eqlab{update}\\
\rho_{n-1}(x,z) &=- \partial_x \zeta^{n-1} F(x,\zeta^{n-1},z)+G(x,\zeta^{n-1},z),\eqlab{rhon_1eqn1}\\
\Lambda(x) &=-\partial_x \zeta_{0}  \partial_y F(x,\zeta_{\text{0}},\mu_0)+\partial_y G(x,\zeta_{0},\mu_0).\eqlab{A0eqn1}
\end{align}
Due to our assumption (A1), however, we are potentially dividing by zero as $\Lambda(x_0)=0$ when solving for the updates $\zeta_{n}$. 
The idea of \cite{brons-2013:iterative} is then to proceed by removing the singularity in the expression for $\zeta_n$ by solving the equation $\rho_{n-1}(x_0,z)=0$ for $z=\mu^n$ so that $-\rho_{n-1}(x,\mu^n)/\Lambda(x)$ is well-defined. We collect this into an algorithm in the following:

\begin{algo}\algolab{algo}
 Suppose (A1) and (A2). To compute the canard explosion point do the following: 
 \begin{itemize}
 \item[$1^\circ$] Define $\tilde \Lambda=\tilde \Lambda(x)$ to be
 \begin{align*}
  \tilde \Lambda(x) = \int_0^1 \partial_x \Lambda (x_0+s(x-x_0),\mu_0)ds.
 \end{align*}
 Then by construction $\Lambda(x)=(x-x_0)\tilde \Lambda(x)$ where $\Lambda$ is defined in \eqref{A0eqn1}. 
 \item[$2^\circ$] Define
 \begin{align*}
 \tilde e_{0}(x) = \int_0^1 \partial_x \rho_{0}(x_0+s(x-x_0),\mu_0)) ds.
\end{align*}
Then by construction $\rho_{0}(x,\mu_0) = (x-x_0)\tilde e_{0}(x)$ where $\rho_0$ is defined in \eqref{rhon_1eqn1}$_{n=1}$.
 \item[$2^\circ$] Iterate the following over $n$ starting from $n=1$ and $\zeta^{0}=\zeta_0$ and $\mu^{0}=\mu_0$ until $\vert \tilde e_n\vert$ has reached a desired tolerance: 
  \begin{itemize}
 \item[(i)]
 Define
\begin{align*}
\zeta_n(x) &=  -\frac{\tilde e_{n-1}(x)}{\tilde \Lambda(x)},\quad
 \zeta^n(x) = \zeta^{n-1}(x)+\zeta_n(x).
\end{align*}
\item[(ii)]  Solve the following equation for $z=\mu_n$:
\begin{align*}
 \rho_{n}(x_0,\mu^{n-1}+z) &=0,
\end{align*}
where
\begin{align*}
\rho_{n}(x,\mu^{n-1}+z)=- \partial_x \zeta^{n} F(x,\zeta^{n},\mu^{n-1}+z)+G(x,\zeta^{n},\mu^{n-1}+z). 
\end{align*}
\item[(iii)]
Set $$\mu^n=\mu^{n-1}+\mu_n,$$ and let
\begin{align}
 \tilde e_{n}(x) = \int_0^1 \partial_x \rho_{n}(x_0+s(x-x_0),\mu^n)) ds.\eqlab{tildeen}
\end{align}
Then by construction $\rho_{n}(x,\mu^n) = (x-x_0)\tilde e_{n}(x)$.
\end{itemize}
\end{itemize}
The graph $y=\zeta^n(x)$ is then the approximation of the canard slow manifold, connecting repelling and attracting branches, at the explosion point $z=\mu^n$. The error is described in \thmref{expest}.
\end{algo}

\textbf{Main result}. Our main result is contained in the following theorem which we prove in \secref{proof}.
\begin{theorem}\thmlab{expest}
Suppose that the assumptions (A1) and (A2) hold true and that $F$ and $G$ are analytic in their arguments $(x,y,z)$. (i) Fix first $n\ge 0$. Then, provided $\epsilon$ is sufficiently small, the procedure defined in \algoref{algo} generates a sequence of $\zeta_i$'s and $\mu_i$'s so that 
 \begin{align*}
  \zeta^n(x) = \sum_{i=0}^n \zeta_i(x),\quad \mu=\mu^n = \sum_{i=0}^n \mu_i
 \end{align*}
satisfies (\ref{eq:1}) up to an error \eqref{tildeen} of $\tilde e_n=\mathcal O(\epsilon^{n+1})$. Moreover, (ii) there exists an $N(\epsilon)=\mathcal O(\epsilon^{-1/2})\in \mathbb N$ so that the error in \eqref{tildeen} with $n=N$ is exponentially small $\tilde e_N =\mathcal O(e^{-c/\epsilon^{1/2}})$, with $c>0$ and independent of $\epsilon$.
\end{theorem}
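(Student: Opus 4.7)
The plan is to extract from \algoref{algo} a single iterative lemma (the \lemmaref{itlam} referred to in the remarks above) which shows that one step of the algorithm trades a small slice of the analyticity strip for a linear gain in the error $\tilde e$. Both parts of \thmref{expest} then follow by iterating this lemma with an appropriate schedule of strip losses.

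The iterative lemma should have the following schematic shape: if at step $n-1$ we have $|\tilde e_{n-1}|_{\nu_{n-1}}\le \tilde\delta_{n-1}$ together with the structural bounds of (A2) on the normal-form data at that step, then for any $0<\xi_n<\nu_{n-1}$ one may perform step $n$ and obtain
\begin{align*}
 |\tilde e_n|_{\nu_{n-1}-\xi_n} \;\le\; \frac{C\,K\,\epsilon}{\xi_n}\,\tilde\delta_{n-1},
\end{align*}
with the updated data still satisfying (A2). To prove it, insert $\zeta_n=-\tilde e_{n-1}/\tilde\Lambda$ (bounded in sup-norm by $K\tilde\delta_{n-1}$ by the definition of $K$) and the parameter update $\mu_n$ into $\rho_n$, and Taylor-expand in $(\zeta_n,\mu_n)$ around $(\zeta^{n-1},\mu^{n-1})$. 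The linear-in-$\zeta_n$ term of $\rho_n-\rho_{n-1}$ equals $\Lambda\,\zeta_n=-(x-x_0)\tilde e_{n-1}$ and exactly cancels $\rho_{n-1}=(x-x_0)\tilde e_{n-1}$. What survives splits into: a term proportional to $\partial_x\zeta_n\cdot F(x,\zeta^{n-1},\mu^n)$, whose $\partial_x\zeta_n$ costs a factor $\xi_n^{-1}$ via Cauchy's estimate; a term quadratic in $\zeta_n$, controlled by $|R_{n-1}|_{\nu_{n-1}}$ from (A2); and terms linear in $\mu_n$, where $\mu_n$ is fixed implicitly by $\rho_n(x_0,\mu^n)=0$ whose solvability is guaranteed by the non-degeneracy condition $|\Phi(x_0)|^{-1}\le K^{-1}\epsilon^{-1}$ in (A2). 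Factoring $(x-x_0)$ out of $\rho_n(x,\mu^n)$ and collecting bounds then yields the recursion.

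Given the lemma, part (i) is a straightforward induction. Fix $n$ and allocate $\xi_j$ as any positive sequence with $\sum_{j\le n}\xi_j\le \nu_0/2$, for instance $\xi_j=\nu_0/(2n)$. Then $\tilde\delta_n\le(CK\epsilon)^n\tilde\delta_0/\prod_{j\le n}\xi_j$, and using $\tilde\delta_0\le K^{-1}\epsilon$ from \eqref{K0} gives $\tilde\delta_n=\mathcal O(\epsilon^{n+1})$. At each step one must also verify that the updated normal-form data still satisfy (A2); since the updates differ from the previous data by $\mathcal O(K\tilde\delta_{j-1})$ terms, the bounds are preserved for $\epsilon$ small enough.

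For part (ii) one optimizes the strip-loss schedule. Choose $\xi_j=c_0/j^2$ so that $\sum_{j\ge 1}\xi_j<\nu_0/2$. The recursion then yields $\tilde\delta_N\le (CK\epsilon/c_0)^N (N!)^2\,\tilde\delta_0$, and Stirling's formula shows this is minimized at $N=N(\epsilon)\asymp \epsilon^{-1/2}$, where it is bounded by $\exp(-c/\epsilon^{1/2})$ for some $c>0$ independent of $\epsilon$. The principal technical obstacle is the careful bookkeeping inside the iterative lemma: pinning the derivative loss down to a single factor $\xi_n^{-1}$ (so that $(N!)^2$, and not a higher power, arises in part (ii)) and showing that the drift of $F_{ny},F_{nz},T_n,R_n,a_n,b_n,f_n$ under the update stays small enough for (A2) to hold uniformly for $n\le N(\epsilon)$.
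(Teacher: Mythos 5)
Your overall strategy coincides with the paper's: recast one pass of \algoref{algo} as an iterative lemma in sup-norms over shrinking complex neighborhoods, with the cancellation $\Lambda\zeta_n=-(x-x_0)\tilde e_{n-1}$ removing the old error, then iterate; part (i) goes through exactly as you describe. The gap is in part (ii), and it is exactly the point you flag but do not resolve: the claim that one step costs only a single factor $\xi_n^{-1}$. In the paper's iterative lemma (\lemmaref{itlam}) each step costs $\xi^{-2}$ with strip loss $2\xi$: besides the Cauchy estimate on $\partial_x\zeta_n$, a second Cauchy estimate is needed to pass from $e_n=\rho_n(\cdot,\mu^n)$, which merely vanishes at $x_0$, to the quotient $\tilde e_n$ in \eqref{tildeen}. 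Your sketch says ``factoring $(x-x_0)$ out \ldots yields the recursion'' but supplies no bound on this quotient without a further loss; a Schwarz/maximum-modulus argument could give it at the price of a factor $\mathrm{dist}(x_0,\partial D)^{-1}$, but you neither state nor prove this, and it relies on your schedule keeping the strip width bounded below by $\nu_0/2$. With the actual $\xi^{-2}$ recursion, your schedule $\xi_j=c_0/j^2$ produces $\tilde\delta_N\lesssim (C\epsilon)^N (N!)^4\,\tilde\delta_0$, whose optimum is at $N\sim\epsilon^{-1/4}$ and yields only $e^{-c/\epsilon^{1/4}}$, weaker than the theorem's $e^{-c/\epsilon^{1/2}}$.

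The paper closes (ii) differently and more simply: after one preparatory application with $\xi_0=\nu_0/4$ (so that $\tilde\delta_1=\mathcal O(\epsilon^2)$), it takes a \emph{constant} strip loss $\xi=\sqrt{2\epsilon\bar C_\infty}$, so that each application of \lemmaref{itlam} contracts the error by the fixed factor $\tfrac12$, and the number of admissible steps before the strip of width $\nu_0/2$ is exhausted is $N(\epsilon)=\lfloor \nu_0/(4\xi)\rfloor=\mathcal O(\epsilon^{-1/2})$, giving $\tilde\delta_{N+1}\le 2^{-N}\tilde\delta_1=\mathcal O(e^{-c/\sqrt{\epsilon}})$. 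So either adopt this constant-$\xi$ schedule (which is compatible with the $\xi^{-2}$ loss), or actually prove your single-loss lemma, e.g.\ by handling the division by $x-x_0$ with the maximum principle instead of Cauchy's estimate, in which case your Stirling computation is sound. As written, part (ii) does not close. The bookkeeping you defer at the end (drift of $a,b,R,T,F_y,F_z$ by $\mathcal O(\tilde\delta)$ per step and uniformity of the constants, $\bar C_n\le 2\bar C_0$, thanks to the geometric decay of $\tilde\delta_n$) is precisely what the paper's lemma records, so that part of your plan is consistent with the paper.
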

The first part (i) only requires smoothness. The last part (ii) requires analyticity.
\begin{remark}
 The $\sqrt{\epsilon}$ in \thmref{expest} is in agreement with the results in \cite{dum1,kru2} where the canard point is obtained as a smooth function of $\sqrt{\epsilon}$.
\end{remark}


\section{Applications}
\label{sec:applications}
\subsection{Van der Pol}\seclab{vdp}
In this section we consider the classical van der Pol system
\begin{align*}
 \dot x &=F(x,y) = y-\frac{1}{3}x^3+x,\\
 \dot y&=G(x,z) = \epsilon (z-x).
 \end{align*}
 This system has a canard explosion near $z=\mu_0=1$ for $\epsilon$ small. Asymptotic expansions yield a more accurate value
 \begin{align}
  \mu=1-\frac{1}{8}\epsilon -\frac{3}{32}\epsilon^2-\frac{173}{1024}\epsilon^3 + \mathcal O(\epsilon^4),\eqlab{muexact}
 \end{align}
see e.g.\ \cite{ben1,zvonkin-1984:non-standard}. To use \algoref{algo} to compute the canard explosion point we first verify the conditions (A1) and (A2). Solving $F(x,y)$ for $y$ gives
\begin{align*}
 y=\zeta_0(x) = \frac{1}{3}x^3-x.
\end{align*}
Then
\begin{align*}
 \Lambda(x) = -\partial_x \zeta_0 \partial_y F(x,\zeta_0(x)) = 1-x^2.
\end{align*}
We have a fold point at $x=x_0=1$ where $\partial_x \zeta_0(x_0)=0$ where also $\Lambda$ vanishes. To complete the verification of (A1) we must solve $G(x_0,z)=0$ for $z=\mu_0$. We obtain $\mu_0=1$. For (A2) note that 
\begin{align}
 \tilde \Lambda(x) = -(1+x),\eqlab{tA0vdp}
\end{align}
dividing $x-x_0=x-1$ out, so that $\tilde \Lambda(x_0)=-2\ne 0$. Since $\Phi\equiv \epsilon$ we have to take case (b) in (A2). The remaining assumptions can easily be verified for $\epsilon$ sufficiently small.

We are now ready to apply \algoref{algo}. For $1^\circ$ we have \eqref{tA0vdp} and 
for $2^\circ$ we first note that
\begin{align*}
 \rho_0(x,z)  = \epsilon (z-x), 
\end{align*}
and so $\tilde e_0 = -\epsilon$ when dividing $x-x_0=x-1$ out from $e_0(x)=\rho_0(x,\mu_0)$.

For $3^\circ$ we first set $\zeta_1 = -\tilde e_0/\tilde \Lambda=-\epsilon /(1+x)$, which finishes step (i) and define $\rho_1$ (ii) as
\begin{align*}
 \rho_1(x,z) = \epsilon (1+z) -\epsilon (1+x)^{-3} (x^3 +3x^2+3x+1-\epsilon).
\end{align*}
Setting $\rho_1(x_0,z)=0$ gives $z=\mu_1=-\frac{1}{8}\epsilon$ so that 
\begin{align}
\mu^1=1+\mu_1 = 1-\frac{1}{8}\epsilon,\eqlab{vdpmu1}
\end{align}
correct to first order in $\epsilon$ cf. \eqref{muexact}. This finishes step (iii). We iterate this procedure and obtain the following approximations
\begin{align}
 \mu^2 = 1-\frac{1}{8}\epsilon + \frac{3}{32}\epsilon^2 - \frac{27}{2048}\epsilon^3,\,\mu^3 = 1-\frac{1}{8}\epsilon + \frac{3}{32}\epsilon^2 - \frac{173}{1024}\epsilon^3+\mathcal O(\epsilon^4),\eqlab{vdpmu2mu3}
\end{align}
correct to order $2$ respectively $3$ in $\epsilon$. 
\subsection{Templator}
In this section we consider the Templator model \cite{templatorref,bro2}
\begin{align}
 \dot x &=F(x,y)=k_u y^2+k_T y^2 x-\frac{qx}{K+x},\eqlab{templator}\\
 \dot y &=G(x,y,z) = z-k_u y^2 - k_T y^2 x,\nonumber
\end{align}
and use \algoref{algo} to compute a canard explosion. Numerical computations indicate a canard explosion at 
\begin{align}
\mu=0.419943,\eqlab{zTemplator}
\end{align}
\cite{templatorref,bro2}. This system has no explicit $\epsilon$, yet
numerical simulations show that the system exhibits a slow-fast
structure. In \cite{bro2} it is shown that various combinations of the
parameters in the system can locally be considered as small
parameters, but no global parametrization in the form
\eqref{xydot} exists. Here we proceed to find a canard explosion without any
identification of an explicit small parameter. As in \cite{bro2} we
set $k_u=0.01$, $k_T=1$, $q=1$, and $K=0.05$. We first solve
$F(x,y)=0$ for $y=\zeta_0(x)$ and obtain
\begin{align}
 \zeta_0(x) = \frac{50 \sqrt{2x(1+150x+5000 x^2)}}{1+150x+5000x^2}.\eqlab{zeta0Templator}
\end{align}
Note that \eqref{zeta0Templator} is independent of $z$. 
The equation also has a negative solution which we have discarded. We then realize that there is a point $x=\sqrt{2}/100=0.014142$ where $\partial_x \zeta_0 = 0$. The function $\Lambda(x)=-\partial_x \zeta_0 \partial_y F(x,\zeta_0)+\partial_y G(x,\zeta_0,z)$ vanishes near this point at $x=x_0=0.014345$. For this value of $x=x_0$, we continue to verify the assumptions in (A1), and compute $z=\mu_0$ giving $G(x_0,\zeta_0(x_0),z)=0$. We obtain $z=\mu_0=0.417681$. The error is $0.5\%$ in comparison with the value in \eqref{zTemplator}. We then define $\tilde \Lambda$ and $\tilde e_0$ by division of $\Lambda$ and $e_0$ by $x-x_0$. To verify (A2) we note that
\begin{align*}
 \Phi &\equiv  1,\\
\tilde \Lambda(x_0)&=996.78,\\
\tilde e_0(x_0) &=-17.157,\\
\end{align*}
so that
\begin{align}
\frac{\tilde e_0(x_0)}{\tilde \Lambda(x_0)} &= 0.017213.  \eqlab{small}
\end{align}
Since \eqref{small} is ``small'', we are confident that (A2), case (a), is satisfied and we therefore proceed by applying \algoref{algo}.
Introducing $\zeta_1(x) = -{\tilde e_0}(x)/{\tilde \Lambda}(x)$ then gives the new error $$\rho_{1}(x,z)=-\partial_x \zeta^1 F(x,\zeta^1)+G(x,\zeta^1,\mu_0+z),\quad \zeta^1=\zeta_0+\zeta_1.$$ We solve $\rho_1(x_0,z)=0$ for $z=\mu_1$ and obtain 
the improved approximation to the canard explosion point
\begin{align*}
 \mu^1 = \mu_0+\mu_1 = 0.419883,
\end{align*}
an error of $0.01 \%$. At the next step we get $\mu^2 =  0.419938$. The error is now $0.001 \%$. 
\subsection{Rotated van der Pol}
%
In this section we again consider the van der Pol equations, but we rotate the coordinates, replacing $x$ by $x-y$ and $y$ by $x+y$:
\begin{align*}
 \dot x &=F(x,y,z)=\frac{1}{2}(2x-(x-y)^3/3+\frac{1}{2}\epsilon(\mu-(x-y)),\\
 \dot y &=G(x,y,z) = -\frac{1}{2}(2x-(x-y)^3/3)+\frac{1}{2}\epsilon (\mu-(x-y)).
\end{align*}
mimicking a situation where the slow and fast variables have not been properly identified. We will demonstrate that the algorithm applies to this case too. 
The fold point in the original variables is no longer a fold point in the sense used in \secref{vdp} in the coordinates used here. Indeed solving for the $x$-nullcline gives
\begin{equation}
\label{eq:3}
 y= \zeta_0(x) = x-6^{1/3} x^{1/3}+\mathcal O(\epsilon),
\end{equation}
with non-zero derivative for the relevant $x$-values. The fold point
now appears where
$\Lambda(x) = 1-6^{2/3} x^{2/3}+ \epsilon 6^{-2/3} x^{-2/3}$
vanishes. The $\mathcal O(\epsilon)$-term in $\Lambda$ is following
the discussion in \remref{rema0} not important: It can collected into
$a_0=\mathcal O(\epsilon)$ in \eqref{xy0z0eqn}. We therefore re-define
$\Lambda$ as $\Lambda(x)=1-6^{2/3} x^{2/3}$. Then $\Lambda$ vanishes
at the point $x=1/6$, where $y=-5/6+\mathcal O(\epsilon)$ according to
(\ref{eq:3}). The point $(x,y)=(1/6,-5/6)$ is also the value obtained by transforming the fold point
point $(1,-2/3)$ in the original variables, used in \secref{vdp}, to
the current rotated variables. This alteration of $\Lambda$ is in
principle not needed: There is a point $\epsilon$-close to the point
above where the old $\Lambda$ vanishes. However, when we ignore this
part then the calculations can actually be done by hand without using
a computer algebra software.

Note that $\partial_x \Lambda(x_0) =-4\ne 0$ and we can therefore define
 \begin{align*}
  \tilde \Lambda(x) = -4+4(x-x_0)-\frac{32}{3}(x-x_0)^2 + \mathcal O((x-x_0)^3).
 \end{align*}
 Now we are in a position to apply \algoref{algo} $2^\circ$. For this we first note that
 \begin{align*}
  \rho_0(x,z) = -\partial_x \zeta_0 F(x,\zeta_0)+G(x,\zeta_0,z) = \epsilon 6^{-2/3} x^{-2/3} (z-6^{1/3} x^{1/3}),
 \end{align*}
and we therefore obtain $z=\mu_0=1$ by solving $\rho_0(x_0,z) = 0$. Following (iii), we then set
\begin{align*}
 \tilde e_0(x) = -\frac{2}{3}\epsilon (3-18(x-x_0)+104 (x-x_0)^2+\mathcal O((x-x_0)^3).
\end{align*}
It is easy to verify the conditions in (A2).
We therefore proceed as in \algoref{algo} ($3^\circ$) starting from (i) setting $\zeta_1(x) = -e_0(x)/\Lambda(x)=-\epsilon 6^{-2/3}/(6^{1/3} x^{1/3}+1)x^{2/3}$ which by construction is smooth also at $x=x_0$. We continue and obtain
\begin{align*}
\mu^1 &= \mu_0+\mu_1 = 1-\frac{1}{8}\epsilon -\frac{35}{32}\epsilon^2 - \frac{545}{384}\epsilon^3+\mathcal O(\epsilon^4),\\\mu^2 &= \mu_0+\mu_1+\mu_2 = 1-\frac{1}{8}\epsilon -\frac{3}{32}\epsilon^2 - \frac{18183}{2048}\epsilon^3+\mathcal O(\epsilon^4),\\
\mu^3 &=\mu_0+\mu_1+\mu_2+\mu_3= 1-\frac{1}{8}\epsilon + \frac{3}{32}\epsilon^2 - \frac{173}{1024}\epsilon^3+\mathcal O(\epsilon^4)
\end{align*}
correct to order $1$, $2$ respectively $3$ cf. \eqref{muexact}. Note that the corrections in $\mu^1$ (the terms $-\frac{35}{32}\epsilon^2 - \frac{545}{384}\epsilon^3$) and $\mu^2$ (the term $- \frac{18183}{2048}\epsilon^3$) are different from those in \eqsref{vdpmu1}{vdpmu2mu3} being $0$ and $- \frac{27}{2048}\epsilon^3$, respectively, but the expressions are correct to the order expected by \thmref{expest}, even though the slow and fast variables have not been properly identified.
 

\subsection{Templator again}\seclab{templator2}
As a final example we consider the Templator model \eqref{templator} again with the parameters $k_u=0.01$, $k_T=1$, $q=1$, and $K=0.05$ as before. The solution of $F(x,y)=0$ gave 
\begin{align*}
y= \zeta_0(x) = \frac{50 \sqrt{2x(1+150x+5000 x^2)}}{1+150x+5000x^2},
\end{align*}
having one single fold point at $x=x_0=\sqrt{2}/100$. There is,
however, another canard explosion at 
\begin{align}
\mu=0.967555,\eqlab{zTemplator2}
\end{align}
\cite{templatorref} that arises from another zero of $\Lambda$. Here
we cannot ignore the term $\partial_y G$ in
$\Lambda$ as we did in the rotated van der Pol above.
This term is not small as is illustrated in \figref{templator}. Here
it is made visible that $\Lambda$ has another zero at $x_0=0.599393$
where $\partial_x \Lambda< 0$. Therefore we can define
$\tilde \Lambda$ by $\Lambda(x)=(x-x_0)\tilde \Lambda(x)$. The zero of
$\Lambda$ gives rise to a singularity in the Eq. \eqref{update}. To continue the verification of (A1) we note that solving $\rho_0(x_0,z)=0$
gives
\begin{align*}
 \mu_0 = 0.967710.
\end{align*}
This gives a relative error of $0.01\%$ in comparison with the value in \eqref{zTemplator2}.
To verify the conditions (A2) we note the following:
\begin{align*}
\tilde \Lambda(x_0)&=-3.6535,\\
\tilde e_0(x_0) &=-0.0521311,
\end{align*}
and $\Phi \equiv 1$ as above,
so that
\begin{align}
 \frac{\tilde e_0(x_0)}{\tilde \Lambda(x_0)} &= 0.014268. \nonumber
\end{align}
This again gives us confidence to apply \algoref{algo}. The expressions are quite messy so we leave out the details and just present the result of one iteration of $2^\circ$ of the method: 
\begin{align*}
 \mu^1 = 0.967560.
\end{align*}
The error is now $0.006\%$. A final additional application gives \begin{align*}
 \mu^2 = 0.967558,
 \end{align*}
 reducing the relative error to $3\times 10^{-6}$.

\begin{figure}[h!]
\begin{center}
{\includegraphics[width=.5\textwidth]{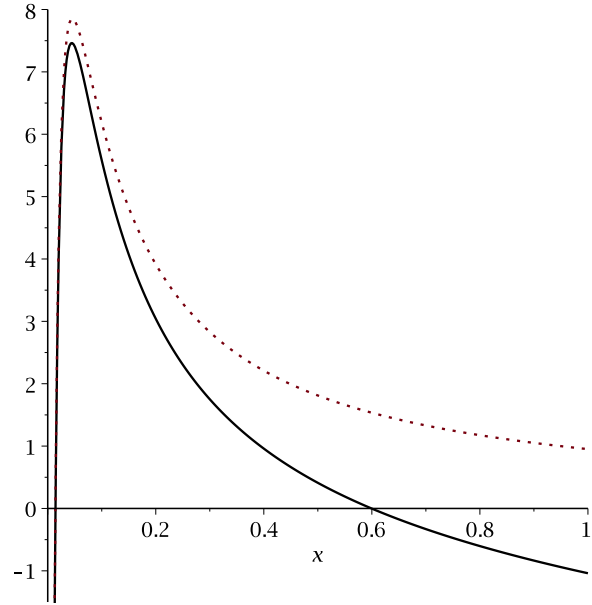}}
\end{center}
\caption{The graph of $\Lambda$ \eqref{A0eqn} and $\Lambda-\partial_y G=-\partial_x \zeta_0\partial_y F$ (dotted).}
\figlab{templator}
\end{figure}


\section{Proof of \thmref{expest}} \seclab{proof}

To prove our theorem we start from \eqref{xy0z0eqn}: 
\begin{align}
 \dot x & =F_0(x,y_0,z_0)=f_0(x)+F_{0y}(x,y_0,z_0) y_0+F_{0z}(x,y_0,z_0)z_0,\eqlab{xy0z0eqn1}\\
 \dot y_0&= e_0(x)+(\Phi(x)+b_0(x))z_0+T_0(x,z_0)+(\Lambda(x)+a_0(x,z_0))y_0+R_0(x,y_0,z_0),\nonumber\\
 \dot z_0&=0,\nonumber
\end{align}
where $f_0\equiv 0$ and $b_0\equiv 0$. 
From conditions (A1) and (A2), it is without loss of generality to take $K\equiv \vert \tilde \Lambda^{-1}\vert_{\nu_0}=1$. Indeed, we can introduce a new time $\tau=K^{-1}t$ to achieve this. Then also by (A2):
\begin{align}
 \tilde \delta_0 \equiv \vert \tilde e\vert_{\nu_0} \le \epsilon\ll 1,\eqlab{delta0}
 \end{align}
and furthermore, assuming case (a):
\begin{align}
 \vert \Phi(x_0)\vert ^{-1},\, \vert \Phi\vert \le C_\Phi. \eqlab{B0est}
\end{align}
Here $C_\Phi>0$ is independent of $\epsilon$. 
The proof for case (b) where 
\begin{align}
 \vert \Phi(x_0)\vert ^{-1}\le C_\Phi/\epsilon,\, \vert \Phi\vert \le C_\Phi\epsilon, \eqlab{B0est2bNew}
\end{align}
is almost identical (see \remref{caseb} below).

We will seek to apply a sequence of transformations $\phi_1$, $\phi_2$, $\ldots$, $\phi_n$ to \eqref{xy0z0eqn} that successively seek to diminish the $\tilde e_i$'s that appear as a result of these transformations. We define the transformations through an iterative lemma.

\textbf{The iterative lemma}. To set up an iterative lemma we start from the normal form \eqref{xy0z0eqn1} with appropriate subscripts removed:
\begin{align}
 \dot x & =F(x,y,z)= f(x) +F_y(x,y,z)y+F_z(x,y,z)z,\eqlab{xyzeqn}\\
 \dot y&= e(x)+(\Phi(x)+b(x))z+T(x,z)+(\Lambda(x)+ a(x,z))y+ R(x,y,z),\nonumber\\
 \dot z&=0,\nonumber
\end{align}
where $$e=(x-x_0)\tilde e(x),$$ 
$T=\mathcal O(z^2)$ independent of $y$, and $R=\mathcal O(y^2)$. We suppose
\begin{align}
\tilde \delta = \vert \tilde e\vert_{\nu},\, \vert a(x,0)\vert_\nu,\,\vert \epsilon^{-1} b(x_0)\vert &\le c\epsilon,\eqlab{best}
\end{align}
for some constant $c>0$ and $\epsilon$ sufficiently small. Note that these conditions are satisfied for our initial system \eqref{xy0z0eqn1}.

Then $y=0,\,z=0$ is an approximation to the canard solution. The accuracy of the approximation is determined by $\tilde \delta$. 
We will in the following apply a transformation $\phi$ to \eqref{xyzeqn}, which will be based on two steps, seeking to improve the approximation of the canard solution. First, we define the solution, $\zeta=\zeta(x)$, of the equation
\begin{align}
e(x)&+\Lambda(x) \zeta(x)=0.\eqlab{zeta}
\end{align}
The solution is
\begin{align}
 \zeta(x) = \frac{\tilde e(x)}{\tilde \Lambda(x)},\eqlab{zetasol}
\end{align}
measuring
\begin{align}
 \vert \zeta\vert_\nu \le {K\tilde \delta}.\eqlab{zetaest}
\end{align}
Now, we set
\begin{align*}
 y=\zeta(x)+y_+.
\end{align*}
This gives
\begin{align*}
 \dot x &=  f^+(x)+F_y^+(x,y_+,z)y_++F_z^+(x,y_+,z)z,\\
 \dot y_+&= e^+(x)+(\Phi(x)+b^+(x))z+T^+(x,z)+(\Lambda(x)+a^+(x,z))y+R^+(x,y,z),\nonumber\\
 \dot z&=0,\nonumber
\end{align*}
with
\begin{align*}
  f^+(x) &=f(x)+F_{y}(x,\zeta,0)\zeta,\\
 F_{z}^+(x,y_+,z) &=F_z(x,\zeta+y_+,z),\\
 e^+(x)&=- \partial_x\zeta   f^+(x)+a(x,0)\zeta + R_+(x,\zeta,0),\\
 b^+(x)&=b(x)-\partial_x\zeta F_{z}(x,\zeta,0)+\partial_z a(x,0)\zeta + \partial_z R_+(x,\zeta,0),\\
 a^+(x,z)&=a(x,z)-\partial_x \zeta F_{y}(x,\zeta,0)+\partial_y R_+(x,\zeta,0),
\end{align*}
and where $F_{y}^+$, $T^+=\mathcal O(z^2)$, which is independent of $y$, and $R^+=\mathcal O(y^2)$ are determined by Taylor's theorem. For $\epsilon$ sufficiently small we then have
\begin{align*}
 \delta^+\equiv \vert e^+\vert_{\nu-\xi} \le \frac{\epsilon C \tilde \delta }{\xi},
\end{align*}
for $C$ sufficiently large, using \eqref{best}, \eqref{zetaest} and the fact that $R^+(x,y,z)=\mathcal O(y^2)$ is quadratic.

The result is not yet appropriate for iteration as
\begin{align*}
 e^+(x_0) \ne 0.
\end{align*}
To account for this we transform $z$ by introducing $z=\mu+z_+$ with $\mu$ satisfying
\begin{align}
 e^+(x_0)+(\Phi(x_0)+b^+(x_0))\mu +T^+(x_0,\mu)= 0.\eqlab{musol}
\end{align}
By \eqref{B0est}, and the contraction mapping theorem, there exists a solution 
\begin{align*}
 \mu \approx -\frac{e^+(x_0)}{\Phi(x_0)},
\end{align*}
of \eqref{musol}, that satisfies
\begin{align}
 \vert \mu\vert \le 2C_\Phi\delta^+\le \frac{2\epsilon C_\Phi C\tilde \delta}{\xi}.\eqlab{muest}
\end{align}
Here we have used that $T^+=\mathcal O(z^2)$, \eqref{best}, the first estimate in \eqref{B0est} and the smallness of $\tilde \delta$ and $\epsilon$. Then the resulting system reads
\begin{align}
 \dot x & =F(x,\zeta+y_+,\mu+z_+)= f_+(x)+F_{+y}(x,y_+,z_+)y_++F_{+z}(x,y_+,z_+)z_+,\eqlab{xyzeqnplus}\\
 \dot y_+&= e_+(x)+(\Phi(x)+b_+(x))z_++T_+(x,z_+)+(\Lambda(x)+a_+(x,z_+))y+R_+(x,y,z_+),\nonumber\\
 \dot z_+&=0,\nonumber
\end{align}
with
\begin{align*}
  f_+(x) &=f^+(x)+F_z^+(x,0,\mu)\mu,\\
  F_{+y}(x,y_+,z_+) &= F^+_y(x,y_+,\mu+z_+)\\
 e_+(x)&=e^+(x)+(\Phi(x)+b^+(x))\mu +T^+(x,\mu),\\
 b_+(x)&=b^+(x)+\partial_z T^+(x,\mu),
\end{align*}
and where $F_{+z}$, $T_+$, $R_+$ are determined by Taylor's theorem. By construction $e_+(x_0)=0$. We estimate the new obstacle to invariance $e_+$ of $y_+=0,\,z_+=0$ as
\begin{align}
 \vert e_+ \vert_{\nu-\xi} &\le \vert e^+\vert_{\nu-\xi} +2 C_{\Phi} \vert \mu\vert\nonumber\\
 &\le \frac{\epsilon C(1+2C_{\Phi}^2)\tilde \delta }{\xi},\eqlab{eplus}
\end{align}
using the second estimate in \eqref{B0est} and \eqref{best}, 
for $\tilde \delta$ and $\epsilon$ sufficiently small. Since $e_+(x_0) = 0$ we write $e_+$ as
\begin{align*}
 e_+(x) = (x-x_0)\tilde e_+(x),\quad \tilde e_+(x) = \int_0^1 \partial_x e_+(x_0+s(x-x_0)) ds,
\end{align*}
and estimate our new error $\tilde e_+$ on $\nu_+=\nu-2\xi$ using a Cauchy estimate and \eqref{eplus}:
\begin{align*}
 \vert \tilde e_+\vert_{\nu_+} \le \frac{\epsilon C(1+2C_{\Phi}^2)\tilde \delta }{\xi^2}.
\end{align*}
\begin{remark}\remlab{caseb}
 If we assume case (b) in (A2) then we obtain a $\epsilon^{-1}$-factor in \eqref{muest} so that $\vert \mu\vert \le 2\epsilon^{-1} \vert \Phi(x_0)\vert^{-1} \delta^+$. However, we recover an estimate as in \eqref{eplus} using \eqref{B0est2bNew} (in place of \eqref{B0est} used above). 
\end{remark}

We collect the results in the following iterative lemma:
\begin{lemma}\lemmalab{itlam}
 Let $\xi>0$ and $\nu_+\equiv \nu-2\xi\ge 0$. Then there exists a $\bar C>0$ so that the transformation
 \begin{align*}
\phi: \,(x,y,z)  &\mapsto (x_+,y_+,z_+),\\
x_+ &= x,\\
y_+ &=\zeta(x)-y,\\
z_+&=\mu-z,
 \end{align*}
 with $\zeta$ and $\mu$ solving \eqsref{zetasol}{musol}, respectively, 
maps \eqref{xyzeqn} into \eqref{xyzeqnplus} where
\begin{align*}
 \vert a-a_+\vert_{\nu_+},\,\vert b-b_+\vert_{\nu_+},\,\vert R-R_+\vert_{\nu_+},\,\vert T-T_+\vert_{\nu_+} \le \bar C \tilde \delta,
\end{align*}
and
\begin{align}
 \tilde \delta_+\equiv \vert \tilde e_+\vert_{\nu_+} \le\frac{\epsilon \bar C}{\xi^2}\tilde \delta,\eqlab{tdeltaplus}
\end{align}
provided $\epsilon$ are sufficiently small. 
\end{lemma}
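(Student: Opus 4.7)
The plan is essentially to formalize the calculations already displayed in the paragraphs preceding the lemma statement, organising them into the two affine substitutions whose composition is $\phi$: first $y = \zeta(x) + y_+$, then $z = \mu + z_+$. Each transformation contributes a single Cauchy-estimate loss on the strip width, and together they produce the $\xi^{-2}$ factor in \eqref{tdeltaplus}.

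For the $y$-substitution I would start by solving \eqref{zeta} explicitly via \eqref{zetasol}, giving $|\zeta|_\nu \leq K\tilde\delta = \tilde\delta$ after the time rescaling that normalises $K$ to one. Substituting $y = \zeta(x) + y_+$ into \eqref{xyzeqn} and Taylor-expanding in $y_+$ about $y_+ = 0$ produces the intermediate coefficients $f^+, F_y^+, F_z^+, e^+, b^+, a^+, T^+, R^+$ listed in the text. The key cancellation is that the $\Lambda(x)\zeta(x)$ contribution produced by the substitution exactly cancels the obstruction $e(x)$, by the defining equation \eqref{zeta} for $\zeta$. What remains in $e^+$ consists of three kinds of terms: $-\partial_x\zeta\cdot f^+$, which requires one Cauchy estimate to bound $|\partial_x\zeta|_{\nu-\xi} \leq \tilde\delta/\xi$; $a(x,0)\zeta$, controlled by the smallness $|a(x,0)|_\nu \leq c\epsilon$ from \eqref{best}; and $R^+(x,\zeta,0)$, which is quadratic in $\zeta$. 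Collecting these gives $\delta^+ = |e^+|_{\nu-\xi} \leq \epsilon C\tilde\delta/\xi$ as stated in the text.

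For the $z$-substitution I would apply the contraction mapping theorem to \eqref{musol}. The linear part has coefficient $\Phi(x_0) + b^+(x_0) = \Phi(x_0)(1 + O(\epsilon))$, invertible by $|\Phi(x_0)^{-1}| \leq C_\Phi$ from \eqref{B0est}; the nonlinear remainder is $O(\mu^2)$ inherited from $T^+$. A standard fixed-point argument on a ball of radius $2C_\Phi\delta^+$ yields the unique $\mu$ satisfying \eqref{muest}. The residual $e_+ - e^+ = (\Phi(x) + b^+(x))\mu + T^+(x,\mu)$ is bounded uniformly in $x$ by $2C_\Phi|\mu|$ using $|\Phi|_\nu \leq C_\Phi$ from \eqref{B0est}, which gives \eqref{eplus}. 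Finally, since $e_+(x_0) = 0$ by construction, I would write $e_+(x) = (x-x_0)\tilde e_+(x)$ with $\tilde e_+(x) = \int_0^1 \partial_x e_+(x_0+s(x-x_0))\,ds$ and apply Cauchy once more on the strip $\nu-2\xi$, producing the second factor of $\xi^{-1}$ and completing \eqref{tdeltaplus}. The bounds on $|a-a_+|$, $|b-b_+|$, $|R-R_+|$, $|T-T_+|$ follow by inspection of the explicit formulas for the transformed coefficients: each difference is a sum of terms linear in $\zeta$ or $\mu$ with prefactors uniformly controlled by (A2), so each is $O(\tilde\delta)$.

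The main subtlety I expect is case (b) of (A2), where \eqref{muest} acquires an $\epsilon^{-1}$ factor from $|\Phi(x_0)^{-1}|$; this is compensated exactly by the $\epsilon$-smallness of $|\Phi|_\nu$ in \eqref{B0est2bNew} when bounding the term $\Phi\mu$ that enters $e_+ - e^+$, so \eqref{eplus} continues to hold. This is the content of Remark \ref{remark:caseb} and is the one place where case (b) behaves differently from case (a); the rest of the bookkeeping is identical. Beyond this, the only routine difficulty is to verify that the Cauchy-estimate loss is applied exactly once per substitution, so that the final bound carries $\xi^{-2}$ rather than a higher power.
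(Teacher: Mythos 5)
Your proposal is correct and follows essentially the same route as the paper's own proof: the two affine substitutions $y=\zeta(x)+y_+$ and $z=\mu+z_+$, the cancellation of $e$ via \eqref{zeta}, the contraction-mapping solution of \eqref{musol} with the bound \eqref{muest}, one Cauchy loss for $\partial_x\zeta$ and one for factoring out $x-x_0$ to get the $\xi^{-2}$ in \eqref{tdeltaplus}, and the case (b) compensation exactly as in \remref{caseb}. No substantive differences to report.
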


The $\mathcal O(\epsilon^n)$ estimates in (i) of \thmref{expest} follow directly from \eqref{tdeltaplus} as each application of the procedure introduces a factor of $\epsilon$.

\textbf{Exponential estimates}. To obtain the exponential estimates we first apply the iterative lemma, \lemmaref{itlam}, to \eqref{xy0z0eqn1} and obtain 
\begin{align}
 \tilde \delta_1= \vert \tilde e_1\vert_{\nu_1} \le \frac{\epsilon \bar C_0}{\xi_0^2}\tilde \delta_0 = \frac{16 \bar C_0}{\nu_0^2}\epsilon^2,\eqlab{tdelta1}
\end{align}
using \eqref{delta0} and \eqref{tdeltaplus} with $\nu_1 = \frac{\nu_0}{2}$ setting here $\xi=\xi_0=\frac{\nu_0}{4}=\mathcal O(1)$.\footnote{We could in principle take any $\xi=\mathcal O(1)$ satisfying $\xi<\frac{\nu_0}{2}$} Here $C_0$ is the constant obtained from applying \lemmaref{itlam} to \eqref{xy0z0eqn1}. Then we apply the lemma successively setting the measure of the domain reduction $\xi$ in \lemmaref{itlam} to be\footnote{In fact, any $\xi = C^{-1}\sqrt{\epsilon}$ for $C$ sufficiently large would do.}  $$\xi=\sqrt{2\epsilon {\bar C}_\infty},$$ with 
\begin{align}
\bar {C}_\infty=2\bar C_{0}.\eqlab{Cbar0}
\end{align}
The $(n+1)$th application of \lemmaref{itlam} gives rise to a constant $\bar C_{n}$. Then by \eqref{tdeltaplus}
\begin{align}
 \tilde \delta_{n+1} \le  \frac{\epsilon\bar C_{n+1}}{\xi_n^2}\tilde \delta_{n}\le 
 2^{-1} \tilde \delta_{n-1}\le 2^{-n} \tilde \delta_1,\eqlab{tdeltan}
\end{align}
while $\bar C_{n+1}\le {\bar C}_\infty= 2\bar C_0$ and
\begin{align}
 \nu_{n+1} = \nu_{n} -2\xi = \frac12 \nu_{0}-2n\xi\ge 0.\eqlab{nun}
\end{align}
By the geometric sum formula and the fact that $\tilde \delta_1=\mathcal O(\epsilon^2)$, cf. \eqref{tdelta1}, the requirement $\bar C_{{n}}\le 2\bar C_{0}$ does not pose any restrictions on $n$ for $\epsilon$ sufficiently small. 
The only restriction on $n$ is contained in the last inequality in \eqref{nun}:
\begin{align*}
 n \le N(\epsilon) \equiv \left \lfloor{\frac{\nu_0}{4\xi}}\right \rfloor =\left \lfloor\frac{\nu_0}{4\sqrt{2\epsilon {\bar C}_\infty}}\right \rfloor.
\end{align*}
Here $\lfloor v \rfloor$ denotes the integer part of $v\ge 0$. Hence \lemmaref{itlam} can be applied $\mathcal O(\epsilon^{1/2})$-many times, which by \eqref{tdeltan} with $n=N(\epsilon)$ gives the exponential estimate
\begin{align*}
 \tilde \delta_{N(\epsilon)+1} \le 2^{-\left \lfloor\frac{\nu_0}{4\sqrt{2\epsilon {\bar C}_\infty}}\right \rfloor} \tilde \delta_1,
\end{align*}
in \thmref{expest}. This then completes the proof.
\bibliography{finalbib}
\bibliographystyle{plain}

\end{document}